\newtheorem{construction}{Construction}
\newtheorem{theorem}{Theorem}
\newtheorem{lemma}[theorem]{Lemma}
\title{A simple proof that the edge density of Fon-der-Flaass $(3,4)$-graph is $\geq\frac{7}{16}(1-o(1))$}
\author{Veronica Phan*}
\thanks{*Ho Chi Minh City; email: \url{kyubivulpes@gmail.com}}
\begin{document}
\begin{abstract}
In 2018, Alexander A. Razborov proved that the edge density of Fon-der-Flaass $(3,4)$-graph is $\geq\frac{7}{16}(1-o(1))$, using flag algebras. In this paper, we give an elementary proof of this result.
\end{abstract}
\maketitle
\section{Introduction}
Given a $3$-graph $G=(V,E)$, the edge density of $G$ is $\frac{|E|}{\binom{|V|}{3}}$. In 1941, Turán \cite{Turan} conjectured that the edge density of any $3$-graph without independent sets on $4$ vertices (or $(3,4)$-graph) is $\geq\frac{4}{9}(1-o(1))$. In 1988, Fon-der-Flaass \cite{FDF} gave a construction of a family of $(3,4)$-graph: 
\begin{construction}
Let $\Gamma=(V,A)$ be a simple directed graph contains no induced subgraph isomorphic to a direct circled of length $4$. We construct $3$-graph $F(\Gamma)=(V,F)$ which $(x,y,z)\in F$ if one of the following conditions hold:

$C1$: The induced subgraph in $\Gamma$ by $\{x,y,z\}$ has at most $1$ vertices.

$C2$: $(x,y),(x,z)\in A$.
\end{construction}
Fon-der-Flaass proved that the edge density of any $(3,4)$-graph that created this way is $\geq\frac{3}{7}(1-o(1))$, and in 2018, Alexander A. Razborov improved this bound to $\frac{7}{16}(1-o(1))$, using flag algebra. We will give an elementary proof of this result, without using flag algebra.

For technical reason, we remove the induced subgraph condition of directed graph $\Gamma$ and consider the complement $3$-graph of $F(\Gamma)$ instead.
\begin{construction}
\label{direct}
Let $\Gamma=(V,A)$ be a simple directed graph. We construct $3$-graph $CF(\Gamma)=(V,F)$ which $(x,y,z)\in F$ if the induced subgraph in $\Gamma$ by $\{x,y,z\}$ has at least $2$ vertices and $\{(x,y),(x,z)\},\{(y,z),(y,x)\},\{(z,x),(z,y)\}\not\subset A$.
\end{construction}
\begin{theorem}
\label{goal}
The edge density of $CF(\Gamma)$ of any directed graph $\Gamma$ is $\leq\frac{9}{16}(1+o(1))$
\end{theorem}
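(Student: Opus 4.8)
The plan is to first replace the definition of $CF(\Gamma)$ by a local condition on triples: interpreting the cardinality condition as requiring that at least two of the three pairs of $\{x,y,z\}$ carry an arc, a triple is an edge exactly when (i) at least two of its pairs bear an arc and (ii) no vertex sends arcs to \emph{both} others, i.e.\ every vertex has out-degree at most $1$ inside the triple (the three excluded configurations are precisely these out-stars). Writing $N=\binom{|V|}{3}$, the non-edges then fall into two disjoint families: the \emph{sparse} triples spanning at most one arc, $L$ of them, and the \emph{out-star} triples containing a vertex of in-triple out-degree $2$, $S$ of them. Hence the number of edges is $N-L-S$, and Theorem~\ref{goal} is equivalent to the lower bound $L+S\geq\frac{7}{16}N(1-o(1))$.

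A clean first reduction is that one may assume $\Gamma$ is an oriented graph: deleting one arc of a digon leaves the underlying graph $G_0$ (hence $L$) unchanged while only removing out-stars, so it can only increase the edge count, and it therefore suffices to bound oriented $\Gamma$. For an oriented $\Gamma$ the source of an out-star in a triple is unique (two out-degree-$2$ vertices would need at least four arcs on three pairs), so $S=\sum_v\binom{d^+_v}{2}$ \emph{exactly}, with $d^+_v$ the out-degree in $\Gamma$. Since $\sum_v d^+_v=m$, the number of edges of $G_0$, convexity of $\binom{\cdot}{2}$ gives $S\geq n\binom{m/n}{2}=\tfrac{3}{4}\rho^{2}N(1-o(1))$, where $\rho$ is the edge density of $G_0$, with equality precisely when all out-degrees are equal.

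The term $L$ depends only on $G_0$, and the heart of the argument is a sharp lower bound for it in terms of $\rho$. Here the extremal behaviour is governed by complete bipartite graphs: for $K_{\alpha n,(1-\alpha)n}$ one has $L=\bigl(\alpha^{3}+(1-\alpha)^{3}\bigr)N$ at density $\rho=2\alpha(1-\alpha)$, and combining this with the convexity bound reduces the whole estimate to the one-variable inequality $\alpha^{3}+(1-\alpha)^{3}+3\alpha^{2}(1-\alpha)^{2}\geq\frac{7}{16}$, whose minimum $\frac{7}{16}$ occurs at $\alpha=\tfrac12$. This pinpoints the extremiser as the balanced complete bipartite graph with a near-regular orientation (each out-degree about $\tfrac12 D_v$), which indeed attains density $\frac{9}{16}$, so the bound is tight.

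The main obstacle is exactly the sharp lower bound on $L$, equivalently the sharp upper bound $\#\{\text{triples with}\geq 2\text{ edges}\}=\sum_v\binom{D_v}{2}-2T\le\frac{9}{16}N$, where $T$ is the number of triangles of $G_0$. The triangle term cannot be controlled by the degree sequence alone: the naive estimate $L\geq N-\sum_v\binom{D_v}{2}$ is tight only for triangle-free $G_0$ and fails badly for triangle-rich graphs such as transitive tournaments (there it is rescued only because $S$ is then as large as $N$). The delicate point is thus to treat triangles simultaneously with the out-star count --- observing that an oriented triangle of $G_0$ is either cyclic (an edge) or transitive (an out-star), so $T$ feeds into both $S$ and the edge count --- and to prove the extremal-graph inequality for $L$ that makes complete bipartite optimal across all densities. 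This is the step that replaces Razborov's flag-algebra certificate, and executing it uniformly over all oriented $\Gamma$, together with the bookkeeping of the $o(1)$ terms, is where the real work lies.
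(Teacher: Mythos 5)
Your accounting is correct as far as it goes: the dichotomy of non-edges into sparse triples and out-star triples, the reduction to oriented graphs by deleting digon arcs, the exact identity $S=\sum_v\binom{d^+_v}{2}$ via uniqueness of the source, the Jensen bound $S\geq\frac{3}{4}\rho^2N(1-o(1))$, and the computation in the complete bipartite family are all fine, and they correctly identify the extremiser. But the proposal is not a proof, and you say so yourself: the claim that complete bipartite graphs minimise the combined quantity $L+S$ at every density --- equivalently a sharp bound on $\sum_v\binom{D_v}{2}-2T$ played off against the out-star count --- \emph{is} the entire content of the theorem, and it is deferred as ``where the real work lies.'' Moreover, the reduction cannot work in the stated form: for $\rho>\frac{1}{2}$ there is no bipartite comparison graph at all, and $L$ admits no useful lower bound in $\rho$ alone (a transitive tournament has $\rho=1$ and $L=0$, rescued only by $S$), so the triangle term $2T$ in $L=N-\sum_v\binom{D_v}{2}+2T$ must be traded against $S$ through the orientation rather than handled separately. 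Your one-variable inequality $\alpha^3+(1-\alpha)^3+3\alpha^2(1-\alpha)^2\geq\frac{7}{16}$ is a verification inside a one-parameter family, not a reduction to it.

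For comparison, the paper closes exactly this gap by a different mechanism, and it is instructive to see that your Jensen step is the uniform-weight shadow of its first move: in Lemma~\ref{step}, Cauchy--Schwarz gives $\frac{1}{2}\sum_x x\bigl(\sum_{(x,y)\in A}y\bigr)^2\geq\frac{1}{2}\bigl(\sum_{(x,y)\in A}xy\bigr)^2$, which is precisely the weighted version of $S\geq\frac{3}{4}\rho^2N$, with the correction term $-\frac{1}{2}\bigl(\sum_{(x,y)\in B}xy\bigr)^2$ in $\mathcal{L}_{BF}$ playing the role of your $\frac{3}{4}\rho^2$; this removes the orientation entirely and leaves Theorem~\ref{main}, an assertion about undirected weighted graphs that is essentially your missing extremal lemma. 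The paper then proves it not by comparison with bipartite graphs but by Zykov-style symmetrisation: for a nonadjacent pair $a,b$, merging one into the other does not decrease the Lagrangian for at least one of the two merges (Lemma~\ref{reduce}, using $S_{a,b}\leq\min(S_a,S_b)$), which reduces everything to complete underlying graphs, where $\mathcal{L}_{BF}=\frac{1}{6}\bigl(1-\sum x_i^3\bigr)-\frac{1}{8}\bigl(1-\sum x_i^2\bigr)^2$ and the claim collapses to a three-variable polynomial inequality. To salvage your route you would need an argument of comparable strength --- some monotone surgery on $G_0$ together with its orientation that handles triangle-rich graphs --- and no candidate for it is offered; as written, the hard step is assumed, not proved.
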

For a directed graph $\Gamma=(V,A)$ and the corresponding $3$-graph $CF(\Gamma)=(V,F)$, define the Lagrangian of them by:
$$\mathcal{L}_{CF}(\Gamma)=\sum_{(x,y,z)\in F}xyz+\frac{1}{2}\sum_{(x,y)\in A}x^2y$$
\begin{theorem}
\label{pre}
For any directed graph $\Gamma=(V,A)$ and any way of assigning non-negative real value to the elements of $V$ such that $\sum_{x\in V}=1$, we have $\mathcal{L}_{CF}(\Gamma)\leq\frac{3}{32}$.
\end{theorem}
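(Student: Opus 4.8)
The plan is to prove the homogeneous degree-$3$ inequality $\mathcal{L}_{CF}(\Gamma)\le\frac{3}{32}$ directly, by studying a configuration that maximizes $\mathcal{L}_{CF}$. Since for fixed $\Gamma$ the expression is a cubic polynomial in the weights, and the weights range over the compact simplex $\{x\ge 0:\sum_v x_v=1\}$, the maximum is attained; moreover a vertex of weight $0$ may be deleted, so I may assume all weights positive. The structural observation I would lean on is that membership of a triple $\{u,v,w\}$ in $F$ is governed entirely by the arcs inside it: unwinding Construction \ref{direct}, $\{u,v,w\}\in F$ exactly when it spans at least two arcs and no vertex is the tail of two of them, i.e.\ every vertex has out-degree at most $1$ inside the triple. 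This makes explicit the tension the proof must resolve: arcs simultaneously \emph{help} (through the term $\tfrac12\sum_{(u,v)\in A}x_u^2x_v$ and by creating oriented $3$-cycles and paths) and \emph{hurt} (an out-cherry destroys a triple).

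First I would record the vertex-removal identity. Writing $p=x_r$ for a chosen vertex $r$ and $R=V\setminus\{r\}$, homogeneity gives
$$\mathcal{L}_{CF}(\Gamma)=\mathcal{L}_R+p\sum_{\{a,b\}\subseteq R,\ \{r,a,b\}\in F} x_ax_b+\frac{p^2}{2}\sum_{a:(r,a)\in A} x_a+\frac{p}{2}\sum_{a:(a,r)\in A} x_a^2,$$
where $\mathcal{L}_R$ is the Lagrangian of the induced configuration on $R$. A naive induction bounding $\mathcal{L}_R\le\frac{3}{32}(1-p)^3$ and estimating the increments separately \emph{fails}: already the oriented triangle (where $\mathcal{L}_{CF}=\tfrac{5}{54}$) shows that at $p=\tfrac13$ the per-vertex increment $\tfrac{2}{27}$ exceeds the allowance $\frac{3}{32}\big(1-(1-p)^3\big)=\tfrac{19}{288}$, because after deleting $r$ the residual configuration on $R$ need not be anywhere near optimal. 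Hence the argument must instead exploit \emph{optimality} of the whole configuration.

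So the core of the proof is a variational analysis at a maximizer. From $\sum_v x_v\,\partial_v\mathcal{L}_{CF}=3\mathcal{L}_{CF}$ (Euler) together with Lagrange multipliers, every support vertex satisfies $\partial_v\mathcal{L}_{CF}=3\mathcal{L}_{CF}$, and shifting a small weight between two support vertices $i,j$ cannot increase $\mathcal{L}_{CF}$. The delicate point is that, because of the arc term $x_i^2x_j$, this one-parameter restriction is genuinely \emph{cubic} in the transferred weight rather than quadratic; this cubic behaviour is precisely what encodes the recursive (blow-up) nature of the extremal Fon-der-Flaass construction. I would extract from the first- and second-order conditions a rigidity statement pinning down the local arc pattern around each vertex (its partition into out-neighbours, in-neighbours, digon-neighbours, and non-neighbours), reducing the global problem to a finite optimization over these class-weights and the arc-densities between the classes.

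The main obstacle is this finite optimization and the sharp bookkeeping feeding it. I must use the out-cherry constraint to bound the ``$F$-triples through $r$'' sum by essentially $\tfrac12\big((1-p)^2-d^+(r)^2\big)$ rather than the lossy $\tfrac12(1-p)^2$, and then show that the balance between this term, the gain $\tfrac{p^2}{2}d^+(r)$ from the arc term, and the residual Lagrangian forces the value to be $\le\frac{3}{32}$, with equality only in the limit of (nearly) uniform weights on the iterated construction, consistent with the asymptotically tight density $\tfrac{9}{16}$. As a clean reformulation for this step I would draw three i.i.d.\ samples $U,V,W$ from the weight distribution and rewrite
\[
6\,\mathcal{L}_{CF}(\Gamma)=\Pr[U,V,W\ \text{distinct and}\ \{U,V,W\}\in F]+\Pr[\text{exactly two of}\ U,V,W\ \text{coincide, with an arc from the repeated vertex to the third}],
\]
which turns the target $\mathcal{L}_{CF}\le\frac{3}{32}$ into the statement that this sum is at most $\tfrac{9}{16}$, isolating cleanly the competition between cyclically oriented triples and out-cherries.
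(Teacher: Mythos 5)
Your scaffolding is sound: the vertex-removal identity is correct, the oriented-triangle computation ($\mathcal{L}_{CF}=\frac{5}{54}$, with increment $\frac{2}{27}>\frac{19}{288}$ at $p=\frac13$) genuinely kills the naive induction, and the probabilistic identity turning the target into $6\mathcal{L}_{CF}\le\frac{9}{16}$ is an exact restatement. But the proposal has a real gap: everything that would actually prove the bound --- the ``rigidity statement'' to be extracted from first- and second-order conditions, and the ``finite optimization over class-weights and arc-densities'' --- is announced rather than executed, and you yourself label it the main obstacle. There are structural reasons to doubt this route closes as described. KKT stationarity and second-order conditions at a maximizer constrain only the \emph{weights} for a fixed $\Gamma$; to ``pin down the local arc pattern'' you must also optimize over $\Gamma$ itself, and arc insertions act on $F$ combinatorially and nonlocally (one added arc contributes $\frac12 x^2y$ but can destroy many triples through new out-cherries), so no smooth variational principle governs those moves. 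Worse, as you note, $\frac{3}{32}$ is approached only along iterated blow-ups, so no finite configuration attains it; a local analysis at a finite maximizer therefore cannot terminate in an extremal structure and must be threaded through an induction on the support --- precisely the induction your own counterexample shows must be subtler than the obvious one, and which the proposal does not supply. Per-vertex bounds like $\frac12\bigl((1-p)^2-d^{+}(r)^2\bigr)$ on the triples through $r$ are of the same vertex-by-vertex character that already failed.

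For contrast, the paper's proof avoids the variational route entirely, and the idea it turns on is absent from your plan: a compensating global quadratic term that eliminates orientations at the outset. It introduces the undirected Lagrangian $\mathcal{L}_{BF}(G)=\sum_{(x,y,z)\in E}xyz+\frac12\sum_{(x,y)\in B}(x^2y+xy^2)-\frac12\bigl(\sum_{(x,y)\in B}xy\bigr)^2$ and proves (Lemma \ref{step}) that $\mathcal{L}_{CF}(\Gamma)\le\mathcal{L}_{BF}(G(\Gamma))$: the out-cherry triples you struggle to bookkeep assemble exactly into $\frac12\sum_{x\in V}x\bigl(\sum_{(x,y)\in A}y\bigr)^2$, which dominates $\frac12\bigl(\sum_{(x,y)\in A}xy\bigr)^2$ by Cauchy--Schwarz using $\sum_{x\in V}x=1$, so the subtracted square absorbs the directed gains uniformly rather than locally. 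Once directions are gone, a Zykov-style merge of any non-adjacent pair (Lemma \ref{reduce}, hinging on $S_{a,b}\le\min(S_a,S_b)$) reduces Theorem \ref{main} to complete graphs, where the Lagrangian collapses to $\frac16\bigl(1-\sum x_i^3\bigr)-\frac18\bigl(1-\sum x_i^2\bigr)^2$, and a monotonicity reduction to the three largest weights leaves a concrete three-variable inequality (verified by CAS). If you wish to rescue your program, the ingredient to look for is an analogue of that subtracted square --- a single global convex term trading off against all out-cherries at once --- not finer stationarity conditions.
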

By assigning $x=\frac{1}{|V|},\forall x\in V$, we can see that Theorem \ref{goal} is followed from Theorem \ref{pre}.
\section{Reduction to undirected graph}
We now construct a similar but simpler family of $3$-graphs from an undirected graph.
\begin{construction}
\label{undirect}
Let $G=(V,B)$ be a graph. We construct $3$-graph $BF(G)=(V,E)$ which $(x,y,z)\in E$ if the induced subgraph in $\Gamma$ by $\{x,y,z\}$ has at least $2$ vertices.
\end{construction}
For a graph $G=(V,B)$ and the corresponding $3$-graph $BF(G)=(V,E)$, define the Lagrangian of them by:
$$\mathcal{L}_{BF}(G)=\sum_{(x,y,z)\in E}xyz+\frac{1}{2}\sum_{(x,y)\in B}(x^2y+xy^2)-\frac{1}{2}\left(\sum_{(x,y)\in B}xy\right)^2$$
For directed graph $\Gamma=(V,A)$, construct $G(\Gamma)=(V,B)$ from $\Gamma$ by forget about all direction.
\begin{lemma}
\label{step}
For any directed graph $\Gamma=(V,A)$ and any way of assigning non-negative real value to the elements of $V$ such that $\sum_{x\in V}=1$, we have $\mathcal{L}_{CF}(\Gamma)\leq\mathcal{L}_{BF}(G(\Gamma))$.
\end{lemma}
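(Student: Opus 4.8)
The plan is to prove $\Delta := \mathcal{L}_{BF}(G(\Gamma)) - \mathcal{L}_{CF}(\Gamma) \ge 0$ directly on the simplex by splitting $\Delta$ into three pieces. Since every triple of $F$ has at least two underlying edges, we have $F \subseteq E$, so the triple terms contribute $T := \sum_{(x,y,z)\in E\setminus F} xyz \ge 0$, a sum over triples that carry an out-star. For the edge terms, writing $P := \sum_{\{x,y\}\in B} xy$, a short computation gives
\[
\tfrac12\sum_{\{x,y\}\in B}(x^2y+xy^2) - \tfrac12\sum_{(x,y)\in A} x^2y = \tfrac12\sum_{\substack{\{u,v\}\in B \\ (u,v)\notin A}} u^2 v =: S \ge 0,
\]
because the undirected edge term is exactly the directed term summed over both orientations of each edge, so the difference collects the absent orientations (here one uses that $\Gamma$ is an oriented graph, so each edge has a unique absent arc; with a $2$-cycle the edge-square below cannot be paid for, and indeed the inequality fails). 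Thus $\Delta = T + S - \tfrac12 P^2$, and the whole difficulty is the negative term $-\tfrac12 P^2$.

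Next I would homogenize to degree $4$ using $\sum_{w\in V} w = 1$, replacing $T,S$ by $\sigma T,\sigma S$ with $\sigma = \sum_{w\in V} w$, and expand
\[
\tfrac12 P^2 = \tfrac12\sum_{\{u,v\}\in B} u^2v^2 + \sum_{\text{cherries }(a;b,c)} a^2 bc + \sum_{\text{disjoint }\{u,v\},\{s,t\}} uvst,
\]
where a cherry $(a;b,c)$ is a pair of edges sharing the apex $a$. The goal becomes a comparison of nonnegative degree-$4$ monomials: cover each negative monomial of $-\tfrac12 P^2$ by the positive monomials produced by $\sigma T$ and $\sigma S$. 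The edge-square $\tfrac12 u^2v^2$ is cancelled exactly by the $w=u$ term of the homogenized contribution $\tfrac12 v^2u\,\sigma$ of that edge; a cherry oriented as an out-star at $a$ is cancelled exactly by the $w=a$ term of $\sigma T$; and the four-distinct term $uvst$ of a disjoint pair is dominated by the AM--GM pairing of $v^2u\cdot s$ with $t^2s\cdot u$ (choosing the extra factor from the opposite edge, so the geometric mean is $uvst$).

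The hard part will be the ``balanced'' configurations in which $\sigma T$ is empty, above all cyclically oriented triangles and cherries with one edge in and one out, where no out-star exists and every negative monomial must be absorbed by $\sigma S$ alone. For a cyclic triangle $a\to b\to c\to a$ this reduces, after the exact cancellations, to $ab^3+bc^3+ca^3 \ge abc(a+b+c)$, i.e.\ $\tfrac{b^2}{c}+\tfrac{c^2}{a}+\tfrac{a^2}{b}\ge a+b+c$, which is Cauchy--Schwarz; the one-in-one-out cherry reduces to a single completion of a square. I expect the genuine obstacle to be organizational rather than analytic: the same monomial $\tfrac12 p^2qw$ of $\sigma S$ can be claimed by a cherry at $p$ and by a disjoint pair, so one must fix a single global allocation of the positive monomials of $\sigma S$ and $\sigma T$ that pays for every edge-square, cherry, and disjoint pair without overspending, and then verify that each residual group is manifestly nonnegative by AM--GM or Cauchy--Schwarz. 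Once the allocation is pinned down, every local inequality is elementary.
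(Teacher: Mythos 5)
Your opening decomposition is exactly the paper's first step: for an oriented $\Gamma$ the set $E\setminus F$ consists precisely of the out-star triples, so $\Delta=T+S-\frac12P^2$ with $S=\frac12\sum_{(x,y)\in A}xy^2$, which agrees with your ``absent orientation'' form; and your side remark that a $2$-cycle breaks the lemma is correct (with $A=\{(x,y),(y,x)\}$ one gets $\Delta=-\frac12x^2y^2<0$, so ``simple'' must be read as excluding $2$-cycles, which the paper also needs for $\sum_{(x,y)\in A}xy=\sum_{(x,y)\in B}xy$). But past that point your proposal is a program, not a proof, and the step you defer is the actual crux. The local pairings you do specify already overspend: for a disjoint pair $\{u,v\},\{s,t\}$ with arcs $u\to v$, $s\to t$, only the pairing of the slots $\frac12v^2us$ and $\frac12t^2su$ has geometric mean $uvst$ (pairing, say, $\frac12v^2ut$ with $\frac12t^2sv$ gives $(vt)^{3/2}(us)^{1/2}$, which is useless), so if $\{u,v\}$ is disjoint from two edges $s\to t_1$ and $s\to t_2$, both pairs claim the single monomial $\frac12v^2us$ under your stated rule, and no alternative cover is given. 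Since you explicitly leave ``fixing a single global allocation'' open, the argument has a genuine gap exactly where the difficulty lives.

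The organization you are searching for is supplied in one stroke by completing the square at each tail, which is what the paper does. Set $N_x:=\sum_{(x,y)\in A}y$. Then $2(T+S)=\sum_{x\in V}xN_x^2$ (the off-diagonal terms of $N_x^2$ are the out-stars at $x$, the diagonal terms are $S$), and $\sum_{x\in V}xN_x=P$, so by the Lagrange identity
\[
\sigma\sum_{x\in V}xN_x^2-\Bigl(\sum_{x\in V}xN_x\Bigr)^2=\sum_{x<x'}xx'\bigl(N_x-N_{x'}\bigr)^2\geq0,
\]
which is the weighted Cauchy--Schwarz inequality the paper invokes with $\sigma=1$. This identity is precisely the consistent global allocation you want: every negative monomial of $P^2$ arises as a cross term $xx'N_xN_{x'}$ and is charged to the square indexed by the pair of tails $\{x,x'\}$, so nothing is ever double-claimed, and all of your residual cases --- the cyclic-triangle inequality $ab^3+bc^3+ca^3\geq abc(a+b+c)$, the in--out cherry, the disjoint pairs --- are subsumed. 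Your case-by-case route could in principle be pushed through (the target is, after all, a weighted sum of squares), but it is strictly harder than this two-line regrouping, and as submitted it does not yet constitute a proof.
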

\begin{proof}
Note that from construction \ref{direct} and \ref{undirect}, $(x,y,z)\in E/F$ if and only if exactly one of the following assertions is true: $(x,y),(x,z)\in A$ or $(y,z),(y,x)\in A$ or $(z,x),(z,y)\in A$. We then have
\begin{equation*}
\begin{split}
\mathcal{L}_{BF}(G(\Gamma))-\mathcal{L}_{CF}(\Gamma) &
=\sum_{(x,y),(x,z)\in A}xyz+\frac{1}{2}\sum_{(x,y)\in A}xy^2-\frac{1}{2}\left(\sum_{(x,y)\in B}xy\right)^2 \\ &
=\frac{1}{2}\sum_{x\in V}x\left(2\sum_{(x,y),(x,z)\in A}yz+\sum_{(x,y)\in A}y^2\right)-\frac{1}{2}\left(\sum_{(x,y)\in B}xy\right)^2 \\ &
=\frac{1}{2}\sum_{x\in V}x\left(\sum_{(x,y)\in A}y\right)^2-\frac{1}{2}\left(\sum_{(x,y)\in B}xy\right)^2 \\ &
=\frac{1}{2}\left(\sum_{x\in V}x\right)\sum_{x\in V}x\left(\sum_{(x,y)\in A}y\right)^2-\frac{1}{2}\left(\sum_{(x,y)\in B}xy\right)^2 \\ &
\geq\frac{1}{2}\left(\sum_{x\in V}x\left(\sum_{(x,y)\in A}y\right)\right)^2-\frac{1}{2}\left(\sum_{(x,y)\in B}xy\right)^2 \\ &
=\frac{1}{2}\left(\sum_{(x,y)\in A}xy\right)^2-\frac{1}{2}\left(\sum_{(x,y)\in B}xy\right)^2=0
\end{split}
\end{equation*}
by Cauchy-Schwarz and the assumption $x\in V$ are nonnegative, $\sum_{x\in V}=1$.
\end{proof}
From Lemma \ref{step}, to prove Theorem \ref{pre}, we only need to prove the following theorem:
\begin{theorem}
\label{main}
For any graph $G=(V,B)$ and any way to assign a non-negative real value to the elements of $V$ such that $\sum_{x\in V}=1$, we have $\mathcal{L}_{BF}(G)\leq\frac{3}{32}$.
\end{theorem}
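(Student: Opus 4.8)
The plan is to start from an exact reformulation of $\mathcal{L}_{BF}(G)$ in terms of local data. Write $d_x=\sum_{y:\,xy\in B}y$ for the weighted degree, $t=\sum_{xy\in B}xy$ for the weighted edge mass, and $T=\sum_{\text{triangles}}xyz$ for the weighted triangle mass. I would evaluate $\sum_{(x,y,z)\in E}xyz$ by grouping each triple of $E$ according to a vertex at which its edges meet: a triple lies in $E$ precisely when it spans a cherry or a triangle, and the symmetric sum $\tfrac12\sum_x x\bigl(d_x^2-\sum_{y\sim x}y^2\bigr)=\sum_x x\sum_{\{y,z\}\subseteq N(x)}yz$ counts each cherry once and each triangle three times. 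Subtracting $2T$ therefore recovers $\sum_{(x,y,z)\in E}xyz$, and since the subtracted term $\tfrac12\sum_x x\sum_{y\sim x}y^2$ is exactly the middle term $\tfrac12\sum_{xy\in B}(x^2y+xy^2)$ of $\mathcal{L}_{BF}$, everything collapses to
\[
\mathcal{L}_{BF}(G)=\tfrac12\sum_x x\,d_x^2-2T-\tfrac12 t^2 .
\]
A second double counting — of (edge, third vertex) incidences, sorted by how many of the three pairs in a triple are edges — rewrites this in an even cleaner form,
\[
\mathcal{L}_{BF}(G)=\tfrac12\bigl(t-t^2\bigr)-\tfrac12\bigl(E_1+E_3\bigr),
\]
where $E_1$ and $E_3=T$ denote the weighted numbers of vertex-triples spanning exactly one edge and exactly three edges. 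I would verify both identities on $K_n$ (value $\tfrac1{24}$) and on the balanced complete bipartite graph (value $\tfrac3{32}$).

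This reduces Theorem~\ref{main} to the single inequality
\[
E_1+E_3\ \ge\ t-t^2-\tfrac{3}{16},\qquad\text{equivalently}\qquad \bigl(\tfrac12-t\bigr)^2+E_1+E_3\ \ge\ \tfrac1{16}.
\]
Because $E_1,E_3\ge 0$ and $t\le\tfrac12$, the statement is vacuous unless $t>\tfrac14$, so all the content sits in the dense regime. The triangle-free case is then immediate: if $E_3=0$ then $G$ is triangle-free, so the Motzkin--Straus/Tur\'an bound forces $t\le\tfrac14$ and hence $(\tfrac12-t)^2\ge\tfrac1{16}$, with $E_1\ge 0$ finishing it.

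The remaining case is the heart of the matter: it is exactly a sharp lower bound on $T$ in terms of $t$ for $t\in(\tfrac14,\tfrac12]$, i.e. the first interval of the minimum-triangle-density problem. My plan is a Zykov-type symmetrization proving that the maximum of $\mathcal{L}_{BF}$ is attained on a complete multipartite graph, where $E_1=0$ (these are precisely the graphs with no induced edge-plus-isolated-vertex). On such a graph, with part masses $\alpha_1,\dots,\alpha_r$, one has $t=e_2(\alpha)$ and $E_3=e_3(\alpha)$ for the elementary symmetric polynomials $e_2,e_3$, and the target reduces to the finite, elementary optimization $\bigl(\tfrac12-e_2(\alpha)\bigr)^2+e_3(\alpha)\ge\tfrac1{16}$, minimized at $r=2$, $\alpha_1=\alpha_2=\tfrac12$ with value exactly $\tfrac1{16}$ (already $r=3$ balanced gives $\tfrac{7}{108}>\tfrac1{16}$).

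The main obstacle is that symmetrization step, because $\mathcal{L}_{BF}$ is not multilinear: the $-\tfrac12 t^2$ term blocks the usual "push all weight to the better vertex" move. Transferring weight between two non-adjacent vertices makes the objective quadratic, with a second-order coefficient that mixes a concave contribution from $-\tfrac12 t^2$ against a convex one from $-\tfrac12(E_1+E_3)$, so neither endpoint nor midpoint is automatically optimal. For genuine clones (equal neighborhoods) the split-dependent part is exactly $+\,xy\cdot(\text{mass of the common neighborhood})\ge 0$, so balancing strictly helps and drives the configuration toward the symmetric bipartite optimum; the real difficulty is the non-clone case, where I expect to compare the two vertices' contributions, clone the weaker onto the stronger, and control the induced change in $t^2$. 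Making this comparison rigorous — equivalently, an elementary derivation of the minimum-triangle-density bound on the first interval — is where I expect essentially all of the work to concentrate.
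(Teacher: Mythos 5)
Your two bookkeeping identities are correct — indeed $\mathcal{L}_{BF}(G)=\tfrac18-\tfrac12\bigl((\tfrac12-t)^2+E_1+E_3\bigr)$, which is a cleaner normal form than anything written in the paper, and your equivalence $(\tfrac12-t)^2+E_1+E_3\ge\tfrac1{16}$ and the checks on $K_n$ and balanced bipartite all verify. But as you concede yourself, the proposal stops exactly where the theorem lives: the symmetrization step is announced, its obstacles are diagnosed, and it is not proved — and it is precisely the paper's key Lemma \ref{reduce}. The paper's resolution sidesteps both of your worries (the non-multilinearity of $-\tfrac12 t^2$, and the non-clone comparison): for non-adjacent $a,b$ with positive weights one neither balances weight nor compares the two vertices' contributions; instead one compares $(a+b)\mathcal{L}_{BF}(G)$ with the convex combination $a\mathcal{L}_{BF}(G_a)+b\mathcal{L}_{BF}(G_b)$ of the two \emph{full} merges, where $G_a$ deletes $b$ and gives its entire weight to $a$, and vice versa. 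With $S_a,S_b$ the weighted neighborhoods and $S_{a,b}$ the weighted common neighborhood, a direct expansion gives
$$a\mathcal{L}_{BF}(G_a)+b\mathcal{L}_{BF}(G_b)-(a+b)\mathcal{L}_{BF}(G)=ab(a+b)\left(\tfrac12\bigl(S_a+S_b-(S_a-S_b)^2\bigr)-S_{a,b}\right),$$
and since the total weight is $1$ we have $|S_a-S_b|\le1$, hence $(S_a-S_b)^2\le|S_a-S_b|$, so the bracket is at least $\min(S_a,S_b)-S_{a,b}\ge0$. Thus one of the two merges does not decrease $\mathcal{L}_{BF}$: the second-order effects of the quadratic term assemble into $(S_a-S_b)^2$, which the normalization dominates. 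This averaging identity is the missing idea; without it (or a substitute) your ``clone the weaker onto the stronger and control the change in $t^2$'' is the entire unproved content of the theorem.

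The endgame is also asserted rather than proved, and it is not literally finite: after all merges one faces $(\tfrac12-e_2(\alpha))^2+e_3(\alpha)\ge\tfrac1{16}$ over probability vectors $\alpha$ of \emph{arbitrary} length, and naming the minimizer $r=2$, $\alpha=(\tfrac12,\tfrac12)$ is the statement, not an argument. The paper, at the equivalent point (weighted complete graphs, $\mathcal{L}_{BF}=\tfrac16(1-\sum x_i^3)-\tfrac18(1-\sum x_i^2)^2$), still needs work: it orders $x_1\ge x_2\ge\cdots$, uses $\sum x_i^2\le x_1^2+x_2^2+x_3(1-x_1-x_2)$ to reduce to three variables, and then cites a CAS-verified inequality — so even the paper does not treat this step as routine. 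Two smaller points: once the merging lemma is in hand, every graph reduces to a weighted complete graph, so your separate triangle-free/Motzkin--Straus branch is dispensable; and your framing of the remaining case as ``exactly'' the first interval of the minimum triangle-density problem oversells it — after the reduction $E_1$ vanishes and only the elementary symmetric-function inequality above is needed, so the sharp triangle-density theorem is never required. In sum: the architecture is essentially the paper's (merge non-adjacent vertices, then optimize a symmetric function), your reformulation is a genuine improvement in bookkeeping, but both load-bearing steps are gaps.
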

\section{Reduction to complete graph}
Assume $G=(V,B)$ is not complete, then there exists $a,b\in V$ such that $(a,b)\notin B$. Define $G_a,G_b$ as the graphs created from $G$ by deleting the vertices $b,a$, respectively. For an assignment of nonnegative real value to the elements of $V$ such that $\sum_{x\in V}=1$, we assign the value of $G_a$ as follows: $x'=a+b$ if $x=a$ and $x'=x$ if otherwise, we do the same for $G_b$. Denote $S_{a,b}=\sum_{x\in V,(a,b,x)\in E}x,S_a=\sum_{x\in V,(a,x)\in B}x,S_b=\sum_{x\in V,b,x)\in B}x$. By construction \ref{undirect}, we have $S_{a,b}\leq\min(S_a,S_b)$, by the condition on the value assignment we have $0\leq S_a,S_b\leq1$, thus $|S_a-S_b|\leq1$.
\begin{lemma}
\label{reduce}
For the above notation, either $\mathcal{L}_{BF}(G)\leq\mathcal{L}_{BF}(G_a)$ or $\mathcal{L}_{BF}(G)\leq\mathcal{L}_{BF}(G_b)$.
\end{lemma}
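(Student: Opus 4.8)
The plan is to freeze the quantity $c:=a+b$ together with all vertex values other than those on $a$ and $b$, and to regard $\mathcal{L}_{BF}(G)$ as a function $f$ of a single variable $t\in[0,c]$, where $t$ is the weight placed on $a$ and $c-t$ the weight placed on $b$. Under this reparametrization the original assignment is $f(a)=\mathcal{L}_{BF}(G)$, collapsing all the weight onto $a$ gives $f(c)=\mathcal{L}_{BF}(G_a)$, and collapsing it onto $b$ gives $f(0)=\mathcal{L}_{BF}(G_b)$. The lemma then reduces to a convexity statement: a convex function on $[0,c]$ attains its maximum at an endpoint, so once I establish that $f$ is convex I immediately get $f(a)\le\max\bigl(f(0),f(c)\bigr)$, which is exactly the desired dichotomy.

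The next step is to extract the $t$-dependence of $\mathcal{L}_{BF}(G)$ term by term. The crucial simplification comes from the hypothesis that $a$ and $b$ are non-adjacent: there is no edge $\{a,b\}$, and the only triples of $E$ containing both $a$ and $b$ are those of the form $\{a,b,x\}$ with $x$ a common neighbour, whose total contribution to the triple sum is precisely $t(c-t)\,S_{a,b}$ — the only genuinely bilinear piece. Every remaining $t$-dependent contribution splits into a part supported at $a$ and a part supported at $b$: the triple sum gives terms linear in $t$ and in $c-t$; the term $\frac{1}{2}\sum_{(x,y)\in B}(x^2y+xy^2)$ gives $\frac{1}{2}\bigl(t^2S_a+(c-t)^2S_b\bigr)$ plus terms linear in $t$ (the linear parts using the fixed second moments of the two neighbourhoods); and the subtracted square $\frac{1}{2}\bigl(\sum_{(x,y)\in B}xy\bigr)^2$ becomes $\frac{1}{2}\bigl((S_a-S_b)t+\mathrm{const}\bigr)^2$, since the edge mass incident to $a$ and $b$ equals $tS_a+(c-t)S_b$ up to a constant. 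Collecting everything, $f(t)$ is a quadratic polynomial in $t$ whose coefficient of $t^2$ equals
\[
-S_{a,b}+\tfrac12(S_a+S_b)-\tfrac12(S_a-S_b)^2 .
\]

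It remains to check that this leading coefficient is non-negative, which is exactly the convexity of $f$. Using $S_{a,b}\le\min(S_a,S_b)$ from Construction \ref{undirect}, I have $\tfrac12(S_a+S_b)-S_{a,b}\ge\tfrac12(S_a+S_b)-\min(S_a,S_b)=\tfrac12|S_a-S_b|$, so the coefficient is at least $\tfrac12|S_a-S_b|\bigl(1-|S_a-S_b|\bigr)$, which is non-negative because $0\le S_a,S_b\le1$ forces $|S_a-S_b|\le1$. Hence $f$ is convex on $[0,c]$ and the lemma follows. The main obstacle I anticipate is not any single inequality but the bookkeeping of the second step: correctly pinning down which monomials depend on $t$, verifying that the constant parts really are constant once $c$ is fixed, and isolating the bilinear term $t(c-t)S_{a,b}$; the non-adjacency of $a$ and $b$ is precisely what keeps this computation clean, and everything then hinges on the elementary bound $\tfrac12(S_a+S_b)-S_{a,b}\ge\tfrac12|S_a-S_b|$ together with $|S_a-S_b|\le1$.
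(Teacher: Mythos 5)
Your proposal is correct and is essentially the paper's own argument in convexity language: the paper's key identity $a\mathcal{L}_{BF}(G_a)+b\mathcal{L}_{BF}(G_b)-(a+b)\mathcal{L}_{BF}(G)=ab(a+b)\left(\frac{1}{2}(S_a+S_b-(S_a-S_b)^2)-S_{a,b}\right)$ is exactly the statement that your quadratic $f(t)$ has leading coefficient $\frac{1}{2}(S_a+S_b)-\frac{1}{2}(S_a-S_b)^2-S_{a,b}$, and both proofs conclude via the identical bound $S_{a,b}\leq\min(S_a,S_b)$ together with $|S_a-S_b|\leq1$. Your endpoint formulation even absorbs the paper's separate $a=0$, $b=0$ cases, but the underlying computation and inequality are the same.
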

\begin{proof}
If $a=0$, then we have $\mathcal{L}_{BF}(G)=\mathcal{L}_{BF}(G_a)$, so the lemma is true. Similar for $b=0$. So we can assume $a,b>0$.

By some algebra manipulation, we have:
\begin{equation*}
\begin{split}
a\mathcal{L}_{BF}(G_a)+b\mathcal{L}_{BF}(G_b)-(a+b)\mathcal{L}_{BF}(G) &
=ab(a+b)\left(\frac{1}{2}(S_a+S_b-(S_a-S_b)^2)-S_{a,b}\right) \\ &
=ab(a+b)\left(\frac{1}{2}(S_a+S_b-|S_a-S_b|)-S_{a,b}\right) \\ &
=ab(a+b)\left(\min(S_a,S_b)-S_{a,b}\right)\geq0
\end{split}
\end{equation*}
by the above observation that $S_{a,b}\leq\min(S_a,S_b)$ and $|S_a-S_b|\leq1$.

Because $a,b>0$, from the above inequality we have the result that we want.
\end{proof}
From Lemma \ref{reduce}, we only need to consider the case $G$ is complete.
\section{Complete the proof}
Now we can assume $G$ is complete, assume $V=\{x_1,x_2,...,x_n\}$, assign non-negative real value to them such that $\sum x_i=1$ then the Lagrangian become:
\begin{equation*}
\begin{split}
\mathcal{L}_{BF}(G) &
=\sum x_ix_jx_k+\frac{1}{2}\sum (x_i^2x_j+x_ix_j^2)-\frac{1}{2}\left(\sum x_ix_j\right)^2 \\ &
=\frac{1}{6}\left(1-\sum x_i^3\right)-\frac{1}{8}\left(1-\sum x_i^2\right)^2
\end{split}
\end{equation*}
We need to prove the Lagrangian is $\leq\frac{3}{32}$. We can assume $n\geq3$, by adding some variables $x_i=0$. Without lost of generality, assume $x_1\geq x_2\geq...\geq x_n$. Then $x_1+x_2+x_3\leq1$ and $\sum x_i^2\leq x_1^2+x_2^2+x_3(1-x_1-x_2)$. So we have:
$$\mathcal{L}_{BF}(G)\leq\frac{1}{6}\left(1-x_1^3-x_2^3-x_3^3\right)-\frac{1}{8}\left(1-x_1^2-x_2^2-x_3(1-x_1-x_2)\right)^2$$
which can be vertified to be $\leq\frac{3}{32}$ by Computer Algebra System like Mathematica.\cite{MSE}

\end{document}